\newtheorem{theorem}{Theorem}
\theoremstyle{plain}
\newtheorem{corollary}{Corollary}
\newtheorem{lemma}{Lemma}
\newtheorem{remark}{Remark}
\numberwithin{equation}{section}
\begin{document}
\title[On New Refinements and Reverses of Young's Operator Inequality]{On
New Refinements and Reverses of Young's Operator Inequality}
\author{S. S. Dragomir$^{1,2}$}
\address{$^{1}$Mathematics, College of Engineering \& Science\\
Victoria University, PO Box 14428\\
Melbourne City, MC 8001, Australia.}
\email{sever.dragomir@vu.edu.au}
\urladdr{http://rgmia.org/dragomir}
\address{$^{2}$School of Computer Science \& Applied Mathematics, University
of the Witwatersrand, Private Bag 3, Johannesburg 2050, South Africa}
\subjclass{26D15; 26D10, 47A63, 47A30, 15A60.}
\keywords{Young's Inequality, Convex functions, Arithmetic mean-Geometric
mean inequality.}

\begin{abstract}
In this paper we obtain some new refinements and reverses of Young's
operator inequality. Extensions for convex functions of operators are also
provided.
\end{abstract}

\maketitle

\section{Introduction}

Throughout this paper $A$ and $B$ are positive operators on a complex
Hilbert space $\left( H,\left\langle \cdot ,\cdot \right\rangle \right) .$
We use the following notations for operators 
\begin{equation*}
A\nabla _{\nu }B:=\left( 1-\nu \right) A+\nu B,\text{ the \textit{weighted
arithmetic mean}}
\end{equation*}%
and%
\begin{equation*}
A\sharp _{\nu }B:=A^{1/2}\left( A^{-1/2}BA^{-1/2}\right) ^{\nu }A^{1/2},%
\text{ the \textit{weighted geometric mean.}}
\end{equation*}%
When $\nu =\frac{1}{2}$ we write $A\nabla B$ and $A\sharp B$ for brevity,
respectively.

The famous \textit{Young inequality} for scalars says that if $a,b>0$ and $%
\nu \in \lbrack 0,1],$ then%
\begin{equation}
a^{1-\nu }b^{\nu }\leq \left( 1-\nu \right) a+\nu b  \label{e.1.1}
\end{equation}%
with equality if and only if $a=b$. The inequality (\ref{e.1.1}) is also
called $\nu $-\textit{weighted arithmetic-geometric mean inequality}.

We recall that \textit{Specht's ratio} is defined by \cite{S} 
\begin{equation}
S\left( h\right) :=\left\{ 
\begin{array}{l}
\frac{h^{\frac{1}{h-1}}}{e\ln \left( h^{\frac{1}{h-1}}\right) }\text{ if }%
h\in \left( 0,1\right) \cup \left( 1,\infty \right) \\ 
\\ 
1\text{ if }h=1.%
\end{array}%
\text{ }\right.  \label{S}
\end{equation}%
It is well known that $\lim_{h\rightarrow 1}S\left( h\right) =1,$ $S\left(
h\right) =S\left( \frac{1}{h}\right) >1$ for $h>0,$ $h\neq 1$. The function
is decreasing on $\left( 0,1\right) $ and increasing on $\left( 1,\infty
\right) .$

The following inequality provides a refinement and a multiplicative reverse
for Young's inequality

\begin{equation}
S\left( \left( \frac{a}{b}\right) ^{r}\right) a^{1-\nu }b^{\nu }\leq \left(
1-\nu \right) a+\nu b\leq S\left( \frac{a}{b}\right) a^{1-\nu }b^{\nu },
\label{e.1.2}
\end{equation}%
where $a,b>0$, $\nu \in \lbrack 0,1],$ $r=\min \left\{ 1-\nu ,\nu \right\} $.

The second inequality in (\ref{e.1.2}) is due to Tominaga \cite{T} while the
first one is due to Furuichi \cite{F}.

The operator version is as follows \cite{F}, \cite{T} :

\begin{theorem}
\label{t.A}For two positive operators $A,$ $B$ and positive real numbers $m,$
$m^{\prime },$ $M,$ $M^{\prime }$ satisfying the following conditions (i) or
(ii):

(i) $0<m^{\prime }I\leq A\leq mI<MI\leq B\leq M^{\prime }I;$

(ii) $0<m^{\prime }I\leq B\leq mI<MI\leq A\leq M^{\prime }I;$

we have%
\begin{equation}
S\left( h^{r}\right) A\sharp _{\nu }B\leq A\nabla _{\nu }B\leq S\left(
h\right) A\sharp _{\nu }B  \label{e.1.2.1}
\end{equation}%
where $h:=\frac{M}{m},$ $h^{\prime }:=\frac{M^{\prime }}{m^{\prime }}$ and $%
\nu \in \left[ 0,1\right] .$
\end{theorem}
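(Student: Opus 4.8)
The plan is to reduce the operator inequality (\ref{e.1.2.1}) to the scalar refinement and reverse (\ref{e.1.2}) by a congruence, and then to pass from the pointwise scalar bounds to the operator bounds through the continuous functional calculus, using the monotonicity of $S$ on $\left( 1,\infty \right) $ and the symmetry $S\left( x\right) =S\left( 1/x\right) $ to collapse the variable scalar constant to the single numbers $S\left( h^{r}\right) $ and $S\left( h\right) $.

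I would first dispose of case (ii) by symmetry: interchanging $A\leftrightarrow B$ and $\nu \leftrightarrow 1-\nu $ and invoking $A\sharp _{\nu }B=B\sharp _{1-\nu }A$ turns (ii) into (i) (note that $r=\min \left\{ 1-\nu ,\nu \right\} $ and the pair $h,h^{\prime }$ are unchanged under this swap), so it suffices to treat (i). Since $A>0$ is invertible, set $X:=A^{-1/2}BA^{-1/2}>0$. For a unit vector $u$, writing $v=A^{-1/2}u$ and using $MI\leq B\leq M^{\prime }I$ together with $m^{\prime }I\leq A\leq mI$ gives $\left\langle Xu,u\right\rangle =\left\langle Bv,v\right\rangle $ with $M\left\langle A^{-1}u,u\right\rangle \leq \left\langle Bv,v\right\rangle \leq M^{\prime }\left\langle A^{-1}u,u\right\rangle $ and $\tfrac{1}{m}\leq \left\langle A^{-1}u,u\right\rangle \leq \tfrac{1}{m^{\prime }}$, whence $\mathrm{Sp}\left( X\right) \subseteq \left[ M/m,M^{\prime }/m^{\prime }\right] =\left[ h,h^{\prime }\right] $ with $1<h\leq h^{\prime }$. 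A direct computation gives $A\sharp _{\nu }B=A^{1/2}X^{\nu }A^{1/2}$ and $A\nabla _{\nu }B=A^{1/2}\big( \left( 1-\nu \right) I+\nu X\big) A^{1/2}$, and since congruence by the invertible operator $A^{1/2}$ preserves operator order, (\ref{e.1.2.1}) is equivalent to $S\left( h^{r}\right) X^{\nu }\leq \left( 1-\nu \right) I+\nu X\leq S\left( h\right) X^{\nu }$.

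Next I would read (\ref{e.1.2}) as a scalar inequality in one variable: putting $a=1$, $b=t$ and using $S\left( x\right) =S\left( 1/x\right) $ yields, for every $t>0$, the two bounds $S\left( t^{r}\right) t^{\nu }\leq \left( 1-\nu \right) +\nu t\leq S\left( t\right) t^{\nu }$. By the spectral theorem it then suffices to verify, for $t\in \mathrm{Sp}\left( X\right) \subseteq \left[ h,h^{\prime }\right] $, these inequalities with the \emph{constant} endpoints $S\left( h^{r}\right) $ and $S\left( h\right) $ in place of the variable factors $S\left( t^{r}\right) $ and $S\left( t\right) $. The lower bound is the clean half: since $S$ is increasing on $\left( 1,\infty \right) $ and $t\geq h>1$ forces $t^{r}\geq h^{r}$, one has $S\left( h^{r}\right) \leq S\left( t^{r}\right) $, so $S\left( h^{r}\right) t^{\nu }\leq S\left( t^{r}\right) t^{\nu }\leq \left( 1-\nu \right) +\nu t$, giving the left inequality in (\ref{e.1.2.1}) at once.

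The hard part will be the upper bound. The scalar reverse only delivers $\left( 1-\nu \right) +\nu t\leq S\left( t\right) t^{\nu }$ with the \emph{variable} factor $S\left( t\right) $, and to replace it by the single value $S\left( h\right) $ one must control $S$ over the whole spectrum $\left[ h,h^{\prime }\right] $ by its value at the inner ratio $h=M/m$. This is exactly the step where the separation hypothesis $m<M$ — and not merely the outer bounds $m^{\prime },M^{\prime }$ — must be brought to bear, pinning the comparison to the inner endpoint and so producing the sharper Specht constant $S\left( h\right) $ in place of the cruder $S\left( h^{\prime }\right) $ attached to the outer ratio $h^{\prime }=M^{\prime }/m^{\prime }$. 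I would therefore concentrate on a careful endpoint analysis of $g\left( t\right) :=\big( \left( 1-\nu \right) +\nu t\big) t^{-\nu }$ against $S\left( h\right) $ on $\left[ h,h^{\prime }\right] $, combining the scalar reverse in (\ref{e.1.2}) with the monotonicity and the symmetry of $S$; establishing that this comparison is governed by $h$ rather than $h^{\prime }$ is the crux of the argument, after which the operator inequality follows by applying the functional calculus to $X$ and conjugating back by $A^{1/2}$.
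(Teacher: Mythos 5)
Your setup is sound and matches the standard route from the cited sources: the reduction of case (ii) to case (i) by the symmetry $A\sharp _{\nu }B=B\sharp _{1-\nu }A$, the congruence by $A^{1/2}$, the localization $\limfunc{Sp}\left( A^{-1/2}BA^{-1/2}\right) \subseteq \left[ h,h^{\prime }\right] $ with $1<h\leq h^{\prime }$, and the lower bound via $t^{r}\geq h^{r}\geq 1$ and the monotonicity of $S$ on $\left( 1,\infty \right) $ are all correct; this is essentially Furuichi's proof of the first inequality in (\ref{e.1.2.1}). Note, for comparison purposes, that the paper itself offers no proof of Theorem \ref{t.A}: it is quoted from \cite{F} and \cite{T}, so the only yardstick is the standard argument you are reconstructing.

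The genuine gap is the upper bound, which you explicitly leave as a plan, and the honest verdict is that your proposed ``crux'' cannot be carried out: the comparison on $\left[ h,h^{\prime }\right] $ is \emph{not} governed by the inner ratio $h$. Your function $g\left( t\right) =\left( \left( 1-\nu \right) +\nu t\right) t^{-\nu }$ satisfies $g^{\prime }\left( t\right) =\nu \left( 1-\nu \right) \left( t-1\right) t^{-\nu -1}>0$ for $t>1$, so $g$ increases without bound on $\left[ h,\infty \right) $ and $\max_{t\in \left[ h,h^{\prime }\right] }g\left( t\right) =g\left( h^{\prime }\right) $, which exceeds $S\left( h\right) $ as soon as $h^{\prime }$ is large while $h$ stays fixed. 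Indeed the reverse inequality as printed is false: take $A=I$, $B=100I$, $m^{\prime }=m=1$, $M=2$, $M^{\prime }=100$, $\nu =\frac{1}{2}$, which satisfies (i); then $A\nabla B=50.5\,I$ while $S\left( h\right) A\sharp B=S\left( 2\right) \cdot 10\,I\approx 10.62\,I$. What your method does prove, replacing the variable factor $S\left( t\right) $ by its maximum $S\left( h^{\prime }\right) $ over $\left[ h,h^{\prime }\right] $, is
\begin{equation*}
S\left( h^{r}\right) A\sharp _{\nu }B\leq A\nabla _{\nu }B\leq S\left( h^{\prime }\right) A\sharp _{\nu }B,
\end{equation*}
which is Tominaga's reverse applied with the outer bounds $m^{\prime }I\leq A,B\leq M^{\prime }I$; the constant $S\left( h\right) $ on the right is legitimate only under a sandwich hypothesis of the form $mI\leq A,B\leq MI$ with $h=M/m$, not under the separated conditions (i)/(ii), where $h=M/m$ is the \emph{inner} ratio. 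So your instinct that this step is where the difficulty lies was exactly right, but no endpoint analysis will close it: the statement should carry $S\left( h^{\prime }\right) $ there, and the paper's quotation of the reverse with $S\left( h\right) $ appears to be a misstatement of the source results.
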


We consider the \textit{Kantorovich's constant }defined by 
\begin{equation}
K\left( h\right) :=\frac{\left( h+1\right) ^{2}}{4h},\text{ }h>0.  \label{K}
\end{equation}%
The function $K$ is decreasing on $\left( 0,1\right) $ and increasing on $%
\left[ 1,\infty \right) ,$ $K\left( h\right) \geq 1$ for any $h>0$ and $%
K\left( h\right) =K\left( \frac{1}{h}\right) $ for any $h>0.$

The following multiplicative refinement and reverse of Young inequality in
terms of Kantorovich's constant\textit{\ }holds%
\begin{equation}
K^{r}\left( \frac{a}{b}\right) a^{1-\nu }b^{\nu }\leq \left( 1-\nu \right)
a+\nu b\leq K^{R}\left( \frac{a}{b}\right) a^{1-\nu }b^{\nu }  \label{e.1.3}
\end{equation}%
where $a,b>0$, $\nu \in \lbrack 0,1],$ $r=\min \left\{ 1-\nu ,\nu \right\} $
and $R=\max \left\{ 1-\nu ,\nu \right\} .$

The first inequality in (\ref{e.1.3}) was obtained by Zou et al. in \cite%
{ZSF} while the second by Liao et al. \cite{LWZ}.

The operator version is as follows \cite{ZSF}, \cite{LWZ}:

\begin{theorem}
\label{t.B}For two positive operators $A,$ $B$ and positive real numbers $m,$
$m^{\prime },$ $M,$ $M^{\prime }$ satisfying the following conditions (i) or
(ii):

(i) $0<m^{\prime }I\leq A\leq mI<MI\leq B\leq M^{\prime }I;$

(ii) $0<m^{\prime }I\leq B\leq mI<MI\leq A\leq M^{\prime }I;$

we have%
\begin{equation}
K^{r}\left( h\right) A\sharp _{\nu }B\leq A\nabla _{\nu }B\leq K^{R}\left(
h\right) A\sharp _{\nu }B  \label{e.1.3.1}
\end{equation}%
where $h:=\frac{M}{m},$ $h^{\prime }:=\frac{M^{\prime }}{m^{\prime }}$, $\nu
\in \left[ 0,1\right] $ $r=\min \left\{ 1-\nu ,\nu \right\} $ and $R=\max
\left\{ 1-\nu ,\nu \right\} .$
\end{theorem}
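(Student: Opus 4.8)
The plan is to reduce the operator inequality (\ref{e.1.3.1}) to the scalar inequality (\ref{e.1.3}) by the standard congruence together with the continuous functional calculus. Assume first that condition (i) holds and put $X:=A^{-1/2}BA^{-1/2}$, a positive invertible operator (note $A\geq m^{\prime }I>0$, so $A^{1/2}$ is invertible). Then
\[
A\sharp _{\nu }B=A^{1/2}X^{\nu }A^{1/2},\qquad A\nabla _{\nu }B=A^{1/2}[(1-\nu )I+\nu X]A^{1/2},
\]
and since the congruence $T\mapsto A^{1/2}TA^{1/2}$ preserves the operator order, (\ref{e.1.3.1}) is equivalent to the operator inequality $K^{r}(h)X^{\nu }\leq (1-\nu )I+\nu X\leq K^{R}(h)X^{\nu }$. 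First I would locate the spectrum of $X$: from $MI\leq B$ and $A\leq mI$ one obtains $X\geq MA^{-1}\geq (M/m)I=hI$, and from $B\leq M^{\prime }I$ and $m^{\prime }I\leq A$ one obtains $X\leq M^{\prime }A^{-1}\leq (M^{\prime }/m^{\prime })I=h^{\prime }I$, so that $\sigma (X)\subseteq [h,h^{\prime }]$ with $h>1$. Condition (ii) is treated symmetrically, exchanging the roles of $A$ and $B$ and of $\nu $ and $1-\nu $ through the identity $A\sharp _{\nu }B=B\sharp _{1-\nu }A$.

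For the refinement (the left inequality) I would apply the first inequality in (\ref{e.1.3}) with $a=1$ and $b=t$, using $K(1/t)=K(t)$, to get the scalar bound $K^{r}(t)t^{\nu }\leq (1-\nu )+\nu t$. Because $K$ is increasing on $[1,\infty )$ and every $t\in \sigma (X)$ satisfies $t\geq h\geq 1$, we have $K(t)\geq K(h)$ and hence $K^{r}(h)\leq K^{r}(t)$, as $r\geq 0$. Therefore $K^{r}(h)t^{\nu }\leq (1-\nu )+\nu t$ holds on $\sigma (X)$, and the functional calculus for $X$ gives $K^{r}(h)X^{\nu }\leq (1-\nu )I+\nu X$; congruence by $A^{1/2}$ then yields $K^{r}(h)A\sharp _{\nu }B\leq A\nabla _{\nu }B$. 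This direction goes through cleanly and produces exactly the factor $K^{r}(h)$.

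For the reverse (the right inequality) I would likewise apply the second inequality in (\ref{e.1.3}) to obtain $(1-\nu )+\nu t\leq K^{R}(t)t^{\nu }$ on $\sigma (X)$, whence $(1-\nu )I+\nu X\leq K^{R}(X)X^{\nu }$ by functional calculus; the aim is then to dominate the operator coefficient $K^{R}(X)$ by the scalar $K^{R}(h)$, after which congruence by $A^{1/2}$ delivers $A\nabla _{\nu }B\leq K^{R}(h)A\sharp _{\nu }B$ as asserted in (\ref{e.1.3.1}). I expect this last passage to be the main obstacle. Since $K$ is increasing on $[1,\infty )$, replacing the pointwise constant $K^{R}(t)$ by $K^{R}(h)$ is precisely the delicate point, and a naive appeal to the inclusion $\sigma (X)\subseteq [h,h^{\prime }]$ controls the constant by the upper end of the spectrum instead of by $h$. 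The plan to overcome this is to discard the crude spectral estimate and instead combine the scalar reverse in (\ref{e.1.3}) with the monotonicity of $K$ and the full two-sided sandwich imposed by (i) and (ii), keeping careful track of which endpoint governs the Kantorovich factor; securing this uniform domination by $K^{R}(h)$ is the crux of the argument.
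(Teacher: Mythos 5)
Your reduction and your proof of the left (refinement) inequality are correct and complete: under (i) the spectrum of $X=A^{-1/2}BA^{-1/2}$ lies in $\left[ h,h^{\prime }\right] $ with $h>1$; since $K$ is increasing on $\left[ 1,\infty \right) $ and $r\geq 0$, the scalar inequality (\ref{e.1.3}) with $a=1$, $b=t$ and $K\left( 1/t\right) =K\left( t\right) $ gives $K^{r}\left( h\right) t^{\nu }\leq K^{r}\left( t\right) t^{\nu }\leq \left( 1-\nu \right) +\nu t$ on the spectrum, and functional calculus plus congruence by $A^{1/2}$ (which preserves the operator order) yields $K^{r}\left( h\right) A\sharp _{\nu }B\leq A\nabla _{\nu }B$; the passage from (i) to (ii) via $A\sharp _{\nu }B=B\sharp _{1-\nu }A$ is also fine. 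Note that the paper itself offers no proof of this theorem --- it is quoted from Zuo--Shi--Fujii and Liao--Wu--Zhao --- so there is no internal argument to compare against; your scalar-plus-functional-calculus route is the standard one.

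The right inequality, however, is a genuine gap, and the repair you sketch cannot be carried out, because with the paper's labeling of the hypotheses the claimed bound is false. Under (i) one has $m^{\prime }\leq m<M\leq M^{\prime }$, so $h=M/m$ is the \emph{inner} ratio (across the gap between $A$ and $B$) and $h^{\prime }=M^{\prime }/m^{\prime }\geq h$ is the \emph{outer} one. Take $H=\mathbb{C}$, $A=m^{\prime }$, $B=M^{\prime }$, $\nu =1/2$, which is admissible for (i). Since $K^{1/2}\left( s\right) =\left( s+1\right) /\left( 2\sqrt{s}\right) $, one computes $A\nabla B=\frac{m^{\prime }+M^{\prime }}{2}=K^{1/2}\left( h^{\prime }\right) \sqrt{m^{\prime }M^{\prime }}$, which strictly exceeds $K^{1/2}\left( h\right) \sqrt{m^{\prime }M^{\prime }}=K^{R}\left( h\right) A\sharp B$ whenever $h^{\prime }>h$ (e.g.\ $m^{\prime }=1$, $m=2$, $M=3$, $M^{\prime }=100$ gives $50.5\not\leq 10.21$). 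So no amount of "keeping track of which endpoint governs the Kantorovich factor" can replace $h^{\prime }$ by $h$: the estimate you dismissed as crude, namely $K^{R}\left( t\right) \leq K^{R}\left( h^{\prime }\right) $ for $t\in \left[ h,h^{\prime }\right] $ by monotonicity of $K$, is in fact sharp (equality in the example above) and proves the correct statement $A\nabla _{\nu }B\leq K^{R}\left( h^{\prime }\right) A\sharp _{\nu }B$. The discrepancy is a transcription slip in this survey: in Liao--Wu--Zhao the hypotheses read $0<mI\leq A\leq m^{\prime }I<M^{\prime }I\leq B\leq MI$, so their $h=M/m$ is the outer ratio; the present paper swapped the primed and unprimed bounds while keeping $h:=M/m$, which turns the true theorem into a false one (the same issue affects the reverse in Theorem \ref{t.A}). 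With the reverse stated for the outer ratio, your own one-line monotonicity argument --- the one you proposed to discard --- completes the proof.
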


Kittaneh and Manasrah \cite{KM1}, \cite{KM2} provided a refinement and an
additive reverse for Young inequality as follows:%
\begin{equation}
r\left( \sqrt{a}-\sqrt{b}\right) ^{2}\leq \left( 1-\nu \right) a+\nu
b-a^{1-\nu }b^{\nu }\leq R\left( \sqrt{a}-\sqrt{b}\right) ^{2}  \label{KM}
\end{equation}%
where $a,b>0$, $\nu \in \lbrack 0,1],$ $r=\min \left\{ 1-\nu ,\nu \right\} $
and $R=\max \left\{ 1-\nu ,\nu \right\} .$ The case $\nu =\frac{1}{2}$
reduces (\ref{KM}) to an identity.

For some operator versions of (\ref{KM}) see \cite{KM1} and \cite{KM2}.

In the recent paper \cite{D}\ we obtained the following reverses of Young's
inequality as well:%
\begin{equation}
0\leq \left( 1-\nu \right) a+\nu b-a^{1-\nu }b^{\nu }\leq \nu \left( 1-\nu
\right) \left( a-b\right) \left( \ln a-\ln b\right)   \label{e.2.10}
\end{equation}%
and%
\begin{equation}
1\leq \frac{\left( 1-\nu \right) a+\nu b}{a^{1-\nu }b^{\nu }}\leq \exp \left[
4\nu \left( 1-\nu \right) \left( K\left( \frac{a}{b}\right) -1\right) \right]
,  \label{e.2.11}
\end{equation}%
where $a,$ $b>0$, $\nu \in \lbrack 0,1].$

It has been shown in \cite{D} that there is no ordering for the upper bounds
of the quantity $\left( 1-\nu \right) a+\nu b-a^{1-\nu }b^{\nu }$ as
provided by the inequalities (\ref{KM}) and (\ref{e.2.10}). The same
conclusion is true for the upper bounds of the quantity $\frac{\left( 1-\nu
\right) a+\nu b}{a^{1-\nu }b^{\nu }}$ incorporated in the inequalities (\ref%
{e.1.2}), (\ref{e.1.3}) and (\ref{e.2.11}).

By the use of two new refinements and reverses of Young's inequality we
establish in this paper several other operators inequalities that are
similar to those from above. Extensions for convex functions of operators
with some examples are also provided.

\section{Some Preliminary Results}

We have the following result:

\begin{lemma}
\label{l.2.1}Let $f:I\subset \mathbb{R}\rightarrow \mathbb{R}$ be a twice
differentiable function on the interval $\mathring{I}$, the interior of $I$.
If there exists the constants $d,$ $D$ such that%
\begin{equation}
d\leq f^{\prime \prime }\left( t\right) \leq D\text{ for any }t\in \mathring{%
I},  \label{e.2.0}
\end{equation}%
then 
\begin{align}
\frac{1}{2}\nu \left( 1-\nu \right) d\left( b-a\right) ^{2}& \leq \left(
1-\nu \right) f\left( a\right) +\nu f\left( b\right) -f\left( \left( 1-\nu
\right) a+\nu b\right)  \label{e.2.1} \\
& \leq \frac{1}{2}\nu \left( 1-\nu \right) D\left( b-a\right) ^{2}  \notag
\end{align}%
for any $a,$ $b\in \mathring{I}$ and $\nu \in \left[ 0,1\right] .$

In particular, we have%
\begin{equation}
\frac{1}{8}\left( b-a\right) ^{2}d\leq \frac{f\left( a\right) +f\left(
b\right) }{2}-f\left( \frac{a+b}{2}\right) \leq \frac{1}{8}\left( b-a\right)
^{2}D,  \label{e.2.1.1}
\end{equation}%
for any $a,$ $b\in \mathring{I}$.

The constant $\frac{1}{8}$ is best possible in both inequalities in (\ref%
{e.2.1.1}).
\end{lemma}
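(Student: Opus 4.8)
The plan is to use the two-sided bound on $f''$ to control the second-order behavior of $f$ and then integrate twice. The cleanest route is to introduce the auxiliary functions
\begin{equation*}
g(t):=f(t)-\tfrac{1}{2}d\,t^{2}\qquad\text{and}\qquad h(t):=\tfrac{1}{2}D\,t^{2}-f(t),
\end{equation*}
so that $g''(t)=f''(t)-d\geq 0$ and $h''(t)=D-f''(t)\geq 0$ on $\mathring{I}$. Thus $g$ and $h$ are both convex. Applying the standard weighted convexity (Jensen) inequality $\,(1-\nu)\phi(a)+\nu\phi(b)\geq\phi((1-\nu)a+\nu b)\,$ to each of $g$ and $h$ will produce the lower and upper estimates in \eqref{e.2.1} respectively, once I account for the quadratic correction term.

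The key computation is the following elementary identity for the quadratic $q(t)=t^{2}$:
\begin{equation*}
(1-\nu)a^{2}+\nu b^{2}-\left((1-\nu)a+\nu b\right)^{2}=\nu(1-\nu)(b-a)^{2}.
\end{equation*}
First I would verify this by direct expansion. Feeding it into the Jensen inequality for $g$ gives
\begin{equation*}
(1-\nu)f(a)+\nu f(b)-f\!\left((1-\nu)a+\nu b\right)\geq\tfrac{1}{2}d\,\nu(1-\nu)(b-a)^{2},
\end{equation*}
which is precisely the left-hand inequality in \eqref{e.2.1}; applying the same argument to $h$ yields the right-hand inequality. The particular case \eqref{e.2.1.1} then follows by setting $\nu=\tfrac{1}{2}$, for which $\nu(1-\nu)=\tfrac{1}{4}$ and hence $\tfrac{1}{2}\cdot\tfrac{1}{4}=\tfrac{1}{8}$.

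For the sharpness claim, the natural strategy is to test the quadratic function $f(t)=t^{2}$ on the interval, for which $f''\equiv 2$, so one may take $d=D=2$. A direct substitution into \eqref{e.2.1.1} shows both outer bounds coincide with the middle quantity, so \eqref{e.2.1.1} becomes an equality with the constant $\tfrac{1}{8}$; this certifies that $\tfrac{1}{8}$ cannot be replaced by any larger constant on the left nor any smaller constant on the right. I expect the only mildly delicate point to be the bookkeeping that ensures $d$ and $D$ are genuinely achieved as the best constants rather than merely admissible ones, but the quadratic test function resolves this cleanly since it saturates both sides simultaneously.
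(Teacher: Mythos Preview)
Your proposal is correct and follows essentially the same approach as the paper: the paper likewise introduces the auxiliary convex functions $\tfrac{1}{2}Dx^{2}-f(x)$ and $f(x)-\tfrac{1}{2}dx^{2}$, applies Jensen's inequality together with the identity $(1-\nu)a^{2}+\nu b^{2}-((1-\nu)a+\nu b)^{2}=\nu(1-\nu)(b-a)^{2}$, and verifies sharpness via $f(x)=x^{2}$ with $d=D=2$.
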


\begin{proof}
We consider the auxiliary function $f_{D}:I\subset \mathbb{R}\rightarrow 
\mathbb{R}$ defined by $f_{D}\left( x\right) =\frac{1}{2}Dx^{2}-f\left(
x\right) .$ The function $f_{D}$ is differentiable on $\mathring{I}$ and $%
f_{D}^{\prime \prime }\left( x\right) =D-f^{\prime \prime }\left( x\right)
\geq 0,$ showing that $f_{D}$ is a convex function on $\mathring{I}.$

By the convexity of $f_{D}$ we have for any $a,$ $b\in \mathring{I}$ and $%
\nu \in \left[ 0,1\right] $ that%
\begin{align*}
0& \leq \left( 1-\nu \right) f_{D}\left( a\right) +\nu f_{D}\left( b\right)
-f_{D}\left( \left( 1-\nu \right) a+\nu b\right) \\
& =\left( 1-\nu \right) \left( \frac{1}{2}Da^{2}-f\left( a\right) \right)
+\nu \left( \frac{1}{2}Db^{2}-f\left( b\right) \right) \\
& -\left( \frac{1}{2}D\left( \left( 1-\nu \right) a+\nu b\right)
^{2}-f_{D}\left( \left( 1-\nu \right) a+\nu b\right) \right) \\
& =\frac{1}{2}D\left[ \left( 1-\nu \right) a^{2}+\nu b^{2}-\left( \left(
1-\nu \right) a+\nu b\right) ^{2}\right] \\
& -\left( 1-\nu \right) f\left( a\right) -\nu f\left( b\right) +f_{D}\left(
\left( 1-\nu \right) a+\nu b\right) \\
& =\frac{1}{2}\nu \left( 1-\nu \right) D\left( b-a\right) ^{2}-\left( 1-\nu
\right) f\left( a\right) -\nu f\left( b\right) +f_{D}\left( \left( 1-\nu
\right) a+\nu b\right) ,
\end{align*}%
which implies the second inequality in (\ref{e.2.1}0).

The first inequality follows in a similar way by considering the auxiliary
function $f_{d}:I\subset \mathbb{R}\rightarrow \mathbb{R}$ defined by $%
f_{D}\left( x\right) =f\left( x\right) -\frac{1}{2}dx^{2}$ that is twice
differentiable and convex on $\mathring{I}$.

If we take $f\left( x\right) =x^{2},$ then (\ref{e.2.0}) holds with equality
for $d=D=2$ and (\ref{e.2.1.1}) reduces to an equality as well.
\end{proof}

If $D>0,$ the second inequality in (\ref{e.2.1}) is better than the
corresponding inequality obtained by Furuichi and Minculete in \cite{FM} by
applying Lagrange's theorem two times. They had instead of $\frac{1}{2}$ the
constant $1$. Our method also allowed to obtain, for $d>0,$ a lower bound
that can not be established by Lagrange's theorem method employed in \cite%
{FM}.

We have:

\begin{theorem}
\label{t.2.1}For any $a,$ $b>0$ and $\nu \in \left[ 0,1\right] $ we have%
\begin{align}
\frac{1}{2}\nu \left( 1-\nu \right) \left( \ln a-\ln b\right) ^{2}\min
\left\{ a,b\right\} & \leq \left( 1-\nu \right) a+\nu b-a^{1-\nu }b^{\nu }
\label{e.2.13} \\
& \leq \frac{1}{2}\nu \left( 1-\nu \right) \left( \ln a-\ln b\right)
^{2}\max \left\{ a,b\right\}   \notag
\end{align}%
and%
\begin{align}
\exp \left[ \frac{1}{2}\nu \left( 1-\nu \right) \frac{\left( b-a\right) ^{2}%
}{\max^{2}\left\{ a,b\right\} }\right] & \leq \frac{\left( 1-\nu \right)
a+\nu b}{a^{1-\nu }b^{\nu }}  \label{e.2.14} \\
& \leq \exp \left[ \frac{1}{2}\nu \left( 1-\nu \right) \frac{\left(
b-a\right) ^{2}}{\min^{2}\left\{ a,b\right\} }\right] .  \notag
\end{align}
\end{theorem}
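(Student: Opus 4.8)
The plan is to derive both inequalities in Theorem \ref{t.2.1} as direct applications of Lemma \ref{l.2.1} to two carefully chosen functions, so that the delicate convexity estimates are already packaged into \eqref{e.2.1}. For the first pair of inequalities \eqref{e.2.13}, I would apply the lemma to the exponential function. Specifically, set $f(t)=e^{t}$ on $I=\mathbb{R}$, and substitute $a\rightarrow\ln a$, $b\rightarrow\ln b$. Then $f((1-\nu)\ln a+\nu\ln b)=a^{1-\nu}b^{\nu}$, while $(1-\nu)f(\ln a)+\nu f(\ln b)=(1-\nu)a+\nu b$, so the central quantity in \eqref{e.2.1} becomes exactly $(1-\nu)a+\nu b-a^{1-\nu}b^{\nu}$ and the factor $(b-a)^{2}$ in the lemma becomes $(\ln a-\ln b)^{2}$.

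The key point is then the choice of $d$ and $D$. Since $f''(t)=e^{t}$, on the interval joining $\ln a$ and $\ln b$ we have $\min\{a,b\}\leq f''(t)\leq\max\{a,b\}$. Feeding $d=\min\{a,b\}$ and $D=\max\{a,b\}$ into \eqref{e.2.1} yields \eqref{e.2.13} immediately. The only care needed is that the lemma as stated asks for a global two-sided bound on $f''$, whereas here the bounds are local to the relevant subinterval; I would either restate that the argument of the lemma only uses $f''$ on the segment between the two points (which is all the convexity proof actually invokes), or simply apply the lemma on a bounded subinterval of $\mathbb{R}$ containing $\ln a$ and $\ln b$ where the indicated bounds on $e^{t}$ hold.

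For the exponential-form inequalities \eqref{e.2.14}, the natural move is to apply Lemma \ref{l.2.1} to $f(t)=-\ln t$ on $I=(0,\infty)$, taking the points to be $a$ and $b$ directly. Here $f''(t)=1/t^{2}$, which is positive and decreasing, so on the interval between $a$ and $b$ we have $1/\max^{2}\{a,b\}\leq f''(t)\leq 1/\min^{2}\{a,b\}$; choose $d=1/\max^{2}\{a,b\}$ and $D=1/\min^{2}\{a,b\}$. The central quantity $(1-\nu)f(a)+\nu f(b)-f((1-\nu)a+\nu b)$ then equals $\ln\bigl[((1-\nu)a+\nu b)\big/(a^{1-\nu}b^{\nu})\bigr]$, because $-(1-\nu)\ln a-\nu\ln b=-\ln(a^{1-\nu}b^{\nu})$ and $-\ln((1-\nu)a+\nu b)$ supplies the numerator. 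Lemma \ref{l.2.1} thus bounds this logarithm between $\tfrac12\nu(1-\nu)(b-a)^{2}/\max^{2}\{a,b\}$ and $\tfrac12\nu(1-\nu)(b-a)^{2}/\min^{2}\{a,b\}$, and exponentiating gives \eqref{e.2.14} verbatim.

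I expect the only genuine obstacle to be the mismatch between the lemma's hypothesis of global bounds on $f''$ and the fact that here $e^{t}$ and $1/t^{2}$ are unbounded on the full interval $I$; the honest fix is to observe that the convexity argument underlying \eqref{e.2.1} uses $f''$ only through the second-order behaviour on the segment joining the two evaluation points, so local bounds on that segment suffice. Everything else is substitution and the algebraic identities $a^{1-\nu}b^{\nu}=\exp((1-\nu)\ln a+\nu\ln b)$ and the logarithmic identity above, both of which are routine.
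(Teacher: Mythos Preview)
Your proposal is correct and follows essentially the same approach as the paper: apply Lemma~\ref{l.2.1} to $f(t)=e^{t}$ with the substitution $x=\ln a$, $y=\ln b$ to obtain \eqref{e.2.13}, and to $f(t)=-\ln t$ at the points $a,b$ followed by exponentiation to obtain \eqref{e.2.14}. Your remark about the lemma only requiring bounds on $f''$ over the segment between the two evaluation points is a valid clarification that the paper leaves implicit.
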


\begin{proof}
If write the inequality (\ref{e.2.1}) for the convex function $f:\mathbb{%
R\rightarrow }\left( 0,\infty \right) ,$ $f\left( x\right) =\exp \left(
x\right) ,$ then we have%
\begin{align}
& \frac{1}{2}\nu \left( 1-\nu \right) \left( x-y\right) ^{2}\min \left\{
\exp x,\exp y\right\}  \label{e.2.12} \\
& \leq \left( 1-\nu \right) \exp \left( x\right) +\nu \exp \left( y\right)
-\exp \left( \left( 1-\nu \right) x+\nu y\right)  \notag \\
& \leq \frac{1}{2}\nu \left( 1-\nu \right) \left( x-y\right) ^{2}\max
\left\{ \exp x,\exp y\right\}  \notag
\end{align}%
for any $x,$ $y\in \mathbb{R}$ and $\nu \in \left[ 0,1\right] .$

Let $a,$ $b>0.$ If we take $x=\ln a,$ $y=\ln b$ in (\ref{e.2.12}), then we
get the desired inequality (\ref{e.2.13}).

Now, if we write the inequality (\ref{e.2.1}) for the convex function $%
f:\left( 0,\infty \right) \rightarrow \mathbb{R}$, $f\left( x\right) =-\ln
x, $ then we get for any $a,$ $b>0$ and $\nu \in \left[ 0,1\right] $ that%
\begin{align}
\frac{1}{2}\nu \left( 1-\nu \right) \frac{\left( b-a\right) ^{2}}{%
\max^{2}\left\{ a,b\right\} }& \leq \ln \left( \left( 1-\nu \right) a+\nu
b\right) -\left( 1-\nu \right) \ln a-\nu \ln b  \label{e.2.12.a} \\
& \leq \frac{1}{2}\nu \left( 1-\nu \right) \frac{\left( b-a\right) ^{2}}{%
\min^{2}\left\{ a,b\right\} }.  \notag
\end{align}
\end{proof}

The second inequalities in (\ref{e.2.13}) and (\ref{e.2.14}) are better than
the corresponding results obtained by Furuichi and Minculete in \cite{FM}
where instead of constant $\frac{1}{2}$ they had the constant $1.$

Now, since%
\begin{equation*}
\frac{\left( b-a\right) ^{2}}{\min^{2}\left\{ a,b\right\} }=\left( \frac{%
\max \left\{ a,b\right\} }{\min \left\{ a,b\right\} }-1\right) ^{2}\text{
and }\frac{\left( b-a\right) ^{2}}{\max^{2}\left\{ a,b\right\} }=\left( 
\frac{\min \left\{ a,b\right\} }{\max \left\{ a,b\right\} }-1\right) ^{2},
\end{equation*}%
then (\ref{e.2.14}) can also be written as:%
\begin{align}
& \exp \left[ \frac{1}{2}\nu \left( 1-\nu \right) \left( 1-\frac{\min
\left\{ a,b\right\} }{\max \left\{ a,b\right\} }\right) ^{2}\right]
\label{e.2.12.b} \\
& \leq \frac{\left( 1-\nu \right) a+\nu b}{a^{1-\nu }b^{\nu }}  \notag \\
& \leq \exp \left[ \frac{1}{2}\nu \left( 1-\nu \right) \left( \frac{\max
\left\{ a,b\right\} }{\min \left\{ a,b\right\} }-1\right) ^{2}\right]  \notag
\end{align}%
for any $a,$ $b>0$ and $\nu \in \left[ 0,1\right] .$

\begin{remark}
\label{r.2.1}For $\nu =\frac{1}{2}$ we get the following inequalities of
interest%
\begin{equation}
\frac{1}{8}\left( \ln a-\ln b\right) ^{2}\min \left\{ a,b\right\} \leq \frac{%
a+b}{2}-\sqrt{ab}\leq \frac{1}{8}\left( \ln a-\ln b\right) ^{2}\max \left\{
a,b\right\}  \label{e.2.15}
\end{equation}%
and%
\begin{equation}
\exp \left[ \frac{1}{8}\frac{\left( b-a\right) ^{2}}{\max^{2}\left\{
a,b\right\} }\right] \leq \frac{\frac{a+b}{2}}{\sqrt{ab}}\leq \exp \left[ 
\frac{1}{8}\frac{\left( b-a\right) ^{2}}{\min^{2}\left\{ a,b\right\} }\right]
,  \label{e.2.16}
\end{equation}%
for any $a,b>0.$
\end{remark}

Consider the functions 
\begin{equation*}
P_{1}\left( \nu ,x\right) :=\nu \left( 1-\nu \right) \left( x-1\right) \ln x
\end{equation*}%
and 
\begin{equation*}
P_{2}\left( \nu ,x\right) :=\frac{1}{2}\nu \left( 1-\nu \right) \left( \ln
x\right) ^{2}\max \left\{ x,1\right\}
\end{equation*}%
for $\nu \in \left[ 0,1\right] $ and $x>0.$ A $3D$ plot for $\nu \in \left(
0,1\right) $ and $x\in \left( 0,2\right) $ reveals that the difference $%
P_{2}\left( \nu ,x\right) -P_{1}\left( \nu ,x\right) $ takes both positive
and negative values showing that there is no ordering between the upper
bounds of the quantity $\left( 1-\nu \right) a+\nu b-a^{1-\nu }b^{\nu }$
provided by (\ref{e.2.10}) and (\ref{e.2.13}) respectively.

Also, if we consider the functions%
\begin{equation*}
Q_{1}\left( \nu ,x\right) :=\exp \left[ \nu \left( 1-\nu \right) \frac{%
\left( x-1\right) ^{2}}{x}\right]
\end{equation*}%
and%
\begin{equation*}
Q_{2}\left( \nu ,x\right) :=\exp \left[ \frac{1}{2}\nu \left( 1-\nu \right) 
\frac{\left( x-1\right) ^{2}}{\min^{2}\left\{ x,1\right\} }\right]
\end{equation*}%
for $\nu \in \left[ 0,1\right] $ and $x>0,$ then a $3D$ plot for $\nu \in
\left( 0,1\right) $ and $x\in \left( 0,10\right) $ reveals that the
difference $P_{2}\left( \nu ,x\right) -P_{1}\left( \nu ,x\right) $ takes
also both positive and negative values showing that there is no ordering
between the upper bounds of the quantity $\frac{\left( 1-\nu \right) a+\nu b%
}{a^{1-\nu }b^{\nu }}$ provided by (\ref{e.2.11}) and (\ref{e.2.14}).

\section{Operator Inequalities}

Let $A$ be a positive operator and $B$ a selfadjoint operator. Assume that
the spectrum of $A^{-1/2}BA^{-1/2},$ $\limfunc{Sp}\left(
A^{-1/2}BA^{-1/2}\right) $ is included in $I,$ an interval of real numbers
and $f:I\rightarrow \mathbb{R}$ a continuous function on $I.$ Using the
functional calculus for continuous functions we can consider the selfadjoint
operator%
\begin{equation}
A\sharp _{f}B:=A^{1/2}f\left( A^{-1/2}BA^{-1/2}\right) A^{1/2}.
\label{e.3.1}
\end{equation}%
If $f\left( x\right) =x^{\nu }$ with $\nu \in \lbrack 0,1]$ then by (\ref%
{e.3.1}) we recapture the concept of weighted geometric mean of two
operators.

We have the following result:

\begin{theorem}
\label{t.3.1}Let $A,$ $B$ be two positive operators. Then we have%
\begin{equation}
\frac{1}{4}\nu \left( 1-\nu \right) A\sharp _{f_{\min }}B\leq A\nabla _{\nu
}B-A\sharp _{\nu }B\leq \frac{1}{4}\nu \left( 1-\nu \right) A\sharp
_{f_{\max }}B  \label{e.3.2}
\end{equation}%
for any $\nu \in \left[ 0,1\right] ,$ where $f_{\min },$ $f_{\max }:\left(
0,\infty \right) \rightarrow \mathbb{R}$ are defined by%
\begin{equation}
f_{\min }\left( x\right) =\left( x+1-\left\vert x-1\right\vert \right) \ln
^{2}x,\text{ }f_{\max }\left( x\right) =\left( x+1+\left\vert x-1\right\vert
\right) \ln ^{2}x.  \label{e.3.2.a}
\end{equation}%
In particular, we have%
\begin{equation}
\frac{1}{16}A\sharp _{f_{\min }}B\leq A\nabla B-A\sharp B\leq \frac{1}{16}%
A\sharp _{f_{\max }}B.  \label{e.3.3}
\end{equation}
\end{theorem}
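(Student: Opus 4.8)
The plan is to transfer the scalar inequality (\ref{e.2.13}) to operators through the continuous functional calculus, which is the standard route for inequalities between operator means. First I would use the special case $a=1$, $b=x$ of (\ref{e.2.13}) for $x>0$. Since then $\ln a-\ln b=-\ln x$, $a^{1-\nu}b^{\nu}=x^{\nu}$ and $(1-\nu)a+\nu b=(1-\nu)+\nu x$, this reads
\begin{align*}
\tfrac{1}{2}\nu(1-\nu)(\ln x)^{2}\min\{1,x\} &\leq (1-\nu)+\nu x-x^{\nu} \\
&\leq \tfrac{1}{2}\nu(1-\nu)(\ln x)^{2}\max\{1,x\}.
\end{align*}
No generality is lost by the normalisation $a=1$, because every term in (\ref{e.2.13}) is positively homogeneous of degree one in $(a,b)$. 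The crux is then the purely algebraic identities $\min\{1,x\}=\tfrac12(x+1-|x-1|)$ and $\max\{1,x\}=\tfrac12(x+1+|x-1|)$, which turn the two bounds into exactly $\tfrac14\nu(1-\nu)f_{\min}(x)$ and $\tfrac14\nu(1-\nu)f_{\max}(x)$, with $f_{\min},f_{\max}$ as in (\ref{e.3.2.a}). This produces the one-variable scalar estimate
\[
\tfrac14\nu(1-\nu)f_{\min}(x)\leq (1-\nu)+\nu x-x^{\nu}\leq \tfrac14\nu(1-\nu)f_{\max}(x),\qquad x>0.
\]

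Next I would pass to the operators. Put $T:=A^{-1/2}BA^{-1/2}$, a positive operator whose spectrum lies in $(0,\infty)$, where $f_{\min}$, $f_{\max}$ and $x\mapsto(1-\nu)+\nu x-x^{\nu}$ are all continuous. Applying the monotonicity of the continuous functional calculus to the displayed scalar estimate yields
\[
\tfrac14\nu(1-\nu)f_{\min}(T)\leq (1-\nu)I+\nu T-T^{\nu}\leq \tfrac14\nu(1-\nu)f_{\max}(T).
\]
I would then transform this by congruence with $A^{1/2}$, i.e. apply the order-preserving map $X\mapsto A^{1/2}XA^{1/2}$. Using $A^{1/2}TA^{1/2}=B$, the definition of $A\sharp_{\nu}B$, and the definition (\ref{e.3.1}) of $A\sharp_{f}B$, the central term becomes $(1-\nu)A+\nu B-A\sharp_{\nu}B=A\nabla_{\nu}B-A\sharp_{\nu}B$, while the outer terms become $\tfrac14\nu(1-\nu)A\sharp_{f_{\min}}B$ and $\tfrac14\nu(1-\nu)A\sharp_{f_{\max}}B$; this is precisely (\ref{e.3.2}).

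I do not anticipate a genuine obstacle, as the argument is a direct functional-calculus transcription once the scalar bounds have been recast through $|x-1|$. The points deserving care are minor: $A$ must be invertible for $T$ and the weighted means to be defined, which is implicit in writing $A\sharp_{\nu}B$; the map $X\mapsto A^{1/2}XA^{1/2}$ preserves $\leq$ since $A^{1/2}$ is selfadjoint; and $f_{\min},f_{\max}$, although not differentiable at $x=1$, are continuous, so the continuous functional calculus set up before (\ref{e.3.1}) applies. Finally, the particular case (\ref{e.3.3}) is obtained by setting $\nu=\tfrac12$, whereupon $\tfrac14\nu(1-\nu)=\tfrac{1}{16}$ and the weighted means collapse to $A\nabla B$ and $A\sharp B$.
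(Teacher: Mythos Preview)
Your proposal is correct and follows essentially the same route as the paper: specialise (\ref{e.2.13}) to $a=1$, $b=x$, rewrite $\min\{1,x\}$ and $\max\{1,x\}$ via $|x-1|$ to obtain $\tfrac14\nu(1-\nu)f_{\min}(x)$ and $\tfrac14\nu(1-\nu)f_{\max}(x)$, apply the continuous functional calculus to $T=A^{-1/2}BA^{-1/2}$, and then conjugate by $A^{1/2}$. Your added remarks on homogeneity, continuity of $f_{\min},f_{\max}$, and invertibility of $A$ are fine clarifications that do not alter the strategy.
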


\begin{proof}
From the inequality (\ref{e.2.13}) we have%
\begin{align}
\frac{1}{2}\nu \left( 1-\nu \right) \min \left\{ 1,x\right\} \ln ^{2}x& \leq
1-\nu +\nu x-x^{\nu }  \label{e.3.4} \\
& \leq \frac{1}{2}\nu \left( 1-\nu \right) \max \left\{ 1,x\right\} \ln ^{2}x
\notag
\end{align}%
for any $x>0$ and $\nu \in \left[ 0,1\right] .$

Since $\min \left\{ 1,x\right\} =\frac{1}{2}\left( x+1-\left\vert
x-1\right\vert \right) $ and $\max \left\{ 1,x\right\} =\frac{1}{2}\left(
x+1+\left\vert x-1\right\vert \right) $ then (\ref{e.3.4}) can be written as%
\begin{align}
& \frac{1}{4}\nu \left( 1-\nu \right) \left( x+1-\left\vert x-1\right\vert
\right) \ln ^{2}x  \label{e.3.5} \\
& \leq 1-\nu +\nu x-x^{\nu }  \notag \\
& \leq \frac{1}{4}\nu \left( 1-\nu \right) \left( x+1+\left\vert
x-1\right\vert \right) \ln ^{2}x  \notag
\end{align}%
for any $x>0$ and $\nu \in \left[ 0,1\right] .$

Using the functional calculus for continuous functions we have for any
positive $X$ that%
\begin{align*}
& \frac{1}{4}\nu \left( 1-\nu \right) \left( X+I-\left\vert X-I\right\vert
\right) \ln ^{2}X \\
& \leq \left( 1-\nu \right) I+\nu X-X^{\nu } \\
& \leq \frac{1}{4}\nu \left( 1-\nu \right) \left( X+I+\left\vert
X-I\right\vert \right) \ln ^{2}X
\end{align*}%
where $I$ is the identity operator.

Substituting $A^{-1/2}BA^{-1/2}$ for $X$ we have%
\begin{align}
& \frac{1}{4}\nu \left( 1-\nu \right) \left( A^{-1/2}BA^{-1/2}+I-\left\vert
A^{-1/2}BA^{-1/2}-I\right\vert \right)   \label{e.3.6} \\
& \times \ln ^{2}\left( A^{-1/2}BA^{-1/2}\right)   \notag \\
& \leq \left( 1-\nu \right) I+\nu A^{-1/2}BA^{-1/2}-\left(
A^{-1/2}BA^{-1/2}\right) ^{\nu }  \notag \\
& \leq \frac{1}{4}\nu \left( 1-\nu \right) \left(
A^{-1/2}BA^{-1/2}+I-\left\vert A^{-1/2}BA^{-1/2}-I\right\vert \right)  
\notag \\
& \times \ln ^{2}\left( A^{-1/2}BA^{-1/2}\right)   \notag
\end{align}%
for any $\nu \in \left[ 0,1\right] .$

Multiplying both sides of (\ref{e.3.6}) by $A^{1/2}$ we get the desired
result (\ref{e.3.2}).
\end{proof}

The following result provides simpler lower and upper bounds for the
difference between the weighted arithmetic and geometric operator means.

\begin{theorem}
\label{t.3.2}Let $A,$ $B$ be two positive operators such that 
\begin{equation*}
0<kI\leq A^{-1/2}BA^{-1/2}\leq KI,
\end{equation*}%
for some constants $k,$ $K.$ Then we have 
\begin{equation}
\frac{1}{4}\nu \left( 1-\nu \right) \min_{x\in \left[ k,K\right] }f_{\min
}\left( x\right) A\leq A\nabla _{\nu }B-A\sharp _{\nu }B\leq \frac{1}{4}\nu
\left( 1-\nu \right) \max_{x\in \left[ k,K\right] }f_{\max }\left( x\right) A
\label{e.3.7}
\end{equation}%
for any $\nu \in \left[ 0,1\right] ,$ where $f_{\min },$ $f_{\max }$ are
defined by (\ref{e.3.2.a}).
\end{theorem}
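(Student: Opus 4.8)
The plan is to deduce (\ref{e.3.7}) directly from Theorem \ref{t.3.1} by estimating the two operators $A\sharp _{f_{\min }}B$ and $A\sharp _{f_{\max }}B$ under the given spectral constraint. Setting $X:=A^{-1/2}BA^{-1/2}$, the hypothesis $kI\leq X\leq KI$ says precisely that $\limfunc{Sp}\left( X\right) \subseteq \left[ k,K\right] $, so the functional calculus for the continuous functions $f_{\min },f_{\max }$ restricted to $\left[ k,K\right] $ is available.

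First I would record the scalar bounds and promote them to operator inequalities. Since $\min_{x\in \left[ k,K\right] }f_{\min }\left( x\right) \leq f_{\min }\left( t\right) $ and $f_{\max }\left( t\right) \leq \max_{x\in \left[ k,K\right] }f_{\max }\left( x\right) $ for every $t\in \left[ k,K\right] $, applying the functional calculus to the selfadjoint operator $X$ (whose spectrum lies in $\left[ k,K\right] $) yields
\[
\Big( \min_{x\in \left[ k,K\right] }f_{\min }\left( x\right) \Big) I\leq f_{\min }\left( X\right) \quad \text{and}\quad f_{\max }\left( X\right) \leq \Big( \max_{x\in \left[ k,K\right] }f_{\max }\left( x\right) \Big) I.
\]
Next I would transport these to $A\sharp _{f_{\min }}B=A^{1/2}f_{\min }\left( X\right) A^{1/2}$ and $A\sharp _{f_{\max }}B=A^{1/2}f_{\max }\left( X\right) A^{1/2}$. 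The congruence $T\mapsto A^{1/2}TA^{1/2}$ preserves the operator order, since for $C\leq D$ one has $\left\langle A^{1/2}CA^{1/2}\xi ,\xi \right\rangle =\left\langle CA^{1/2}\xi ,A^{1/2}\xi \right\rangle \leq \left\langle DA^{1/2}\xi ,A^{1/2}\xi \right\rangle =\left\langle A^{1/2}DA^{1/2}\xi ,\xi \right\rangle $, and it sends $I$ to $A^{1/2}IA^{1/2}=A$. Applying it to the two inequalities above gives
\[
\Big( \min_{x\in \left[ k,K\right] }f_{\min }\left( x\right) \Big) A\leq A\sharp _{f_{\min }}B\quad \text{and}\quad A\sharp _{f_{\max }}B\leq \Big( \max_{x\in \left[ k,K\right] }f_{\max }\left( x\right) \Big) A.
\]

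Finally, chaining these two estimates with the conclusion (\ref{e.3.2}) of Theorem \ref{t.3.1}, and multiplying through by the nonnegative scalar $\frac{1}{4}\nu \left( 1-\nu \right) $ (which preserves the order for $\nu \in \left[ 0,1\right] $), produces exactly (\ref{e.3.7}). I do not expect a genuine obstacle in this argument; the only point requiring care is that the spectral localization $\limfunc{Sp}\left( X\right) \subseteq \left[ k,K\right] $ is what legitimizes replacing $f_{\min }\left( X\right) $ and $f_{\max }\left( X\right) $ by their extremal scalar values taken over $\left[ k,K\right] $ rather than over all of $\left( 0,\infty \right) $, together with the observation that the operator order survives both the congruence by $A^{1/2}$ and the scaling by the nonnegative factor $\frac{1}{4}\nu \left( 1-\nu \right) $.
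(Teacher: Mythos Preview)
Your proof is correct and follows essentially the same route as the paper: both bound $f_{\min}$ and $f_{\max}$ on $[k,K]$ by their extrema, pass to operators via the functional calculus, and conjugate by $A^{1/2}$. The only cosmetic difference is that you invoke Theorem~\ref{t.3.1} as a black box and then estimate $A\sharp _{f_{\min }}B$ and $A\sharp _{f_{\max }}B$, whereas the paper returns to the scalar inequality (\ref{e.3.5}) and inserts the constant bounds there before lifting to operators and multiplying by $A^{1/2}$.
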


\begin{proof}
From the inequality (\ref{e.3.5}) we have%
\begin{align}
\frac{1}{4}\nu \left( 1-\nu \right) \min_{x\in \left[ k,K\right] }f_{\min
}\left( x\right) & \leq 1-\nu +\nu x-x^{\nu }  \label{e.3.8} \\
& \leq \frac{1}{4}\nu \left( 1-\nu \right) \max_{x\in \left[ k,K\right]
}f_{\max }\left( x\right)   \notag
\end{align}%
for any $x\in \left[ k,K\right] $ and $\nu \in \left[ 0,1\right] .$

If $X$ is a selfadjoint operator with $\limfunc{Sp}\left( X\right) \subset %
\left[ k,K\right] ,$ then by (\ref{e.3.8}) we have%
\begin{align}
\frac{1}{4}\nu \left( 1-\nu \right) \min_{x\in \left[ k,K\right] }f_{\min
}\left( x\right) I& \leq \left( 1-\nu \right) I+\nu X-X^{\nu }  \label{e.3.9}
\\
& \leq \frac{1}{4}\nu \left( 1-\nu \right) \max_{x\in \left[ k,K\right]
}f_{\max }\left( x\right) I.  \notag
\end{align}%
for any $\nu \in \left[ 0,1\right] .$

Now, if we take in (\ref{e.3.9}) $X=A^{-1/2}BA^{-1/2},$ then we get%
\begin{align}
& \frac{1}{4}\nu \left( 1-\nu \right) \min_{x\in \left[ k,K\right] }f_{\min
}\left( x\right) I  \label{e.3.10} \\
& \leq \left( 1-\nu \right) I+\nu A^{-1/2}BA^{-1/2}-\left(
A^{-1/2}BA^{-1/2}\right) ^{\nu }  \notag \\
& \leq \frac{1}{4}\nu \left( 1-\nu \right) \max_{x\in \left[ k,K\right]
}f_{\max }\left( x\right) I.  \notag
\end{align}%
Multiplying both sides of (\ref{e.3.2}) by $A^{1/2}$ we get the desired
result (\ref{e.3.7}).
\end{proof}

\begin{remark}
\label{r.3.1}If $0<m^{\prime }I\leq A\leq mI<MI\leq B\leq M^{\prime }I$ for
positive real numbers $m,$ $m^{\prime },$ $M,$ $M^{\prime }$ then by putting 
$h:=\frac{M}{m},$ $h^{\prime }:=\frac{M^{\prime }}{m^{\prime }}$ we have%
\begin{equation*}
0<h^{\prime }I\leq A^{-1/2}BA^{-1/2}\leq hI.
\end{equation*}%
Therefore we can take in Theorem \ref{t.3.2} $k=h^{\prime }$ and $K=h.$

If $0<m^{\prime }I\leq B\leq mI<MI\leq A\leq M^{\prime }I,$ then%
\begin{equation*}
0<\frac{1}{h}I\leq A^{-1/2}BA^{-1/2}\leq \frac{1}{h^{\prime }}I
\end{equation*}%
and we can take in Theorem \ref{t.3.2} $k=\frac{1}{h}$ and $K=\frac{1}{%
h^{\prime }}.$
\end{remark}

The following multiplicative refinement and reverse of Young's operator
inequality also holds.

\begin{theorem}
\label{t.3.3}Let $A,$ $B$ be two positive operators such that 
\begin{equation*}
0<kI\leq A^{-1/2}BA^{-1/2}\leq KI,
\end{equation*}%
for some constants $k,K.$ Then we have%
\begin{multline}
\exp \left[ \frac{1}{2}\nu \left( 1-\nu \right) \left( 1-\frac{\min \left\{
1,K\right\} }{\max \left\{ 1,k\right\} }\right) ^{2}\right] A\sharp _{\nu
}B\leq A\nabla _{\nu }B  \label{e.3.11} \\
\leq \exp \left[ \frac{1}{2}\nu \left( 1-\nu \right) \left( \frac{\max
\left\{ 1,K\right\} }{\min \left\{ 1,k\right\} }-1\right) ^{2}\right]
A\sharp _{\nu }B
\end{multline}%
any $\nu \in \left[ 0,1\right] .$
\end{theorem}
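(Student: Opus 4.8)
The plan is to reproduce for (\ref{e.3.11}) the same three-stage argument used in the proofs of Theorems \ref{t.3.1} and \ref{t.3.2}. First I would reduce the assertion to a one-variable scalar inequality; then I would exploit the spectral bound $kI\leq A^{-1/2}BA^{-1/2}\leq KI$ to replace the variable exponential prefactors by honest constants; and finally I would transport the scalar inequality to operators via the continuous functional calculus followed by multiplication of both sides by $A^{1/2}$. For the reduction, take $a=1$ and $b=x$ in the scalar inequality (\ref{e.2.12.b}) and multiply throughout by $x^{\nu}>0$ to obtain
\begin{align*}
\exp\left[\frac{1}{2}\nu\left(1-\nu\right)\left(1-\frac{\min\left\{1,x\right\}}{\max\left\{1,x\right\}}\right)^{2}\right]x^{\nu}&\leq\left(1-\nu\right)+\nu x\\
&\leq\exp\left[\frac{1}{2}\nu\left(1-\nu\right)\left(\frac{\max\left\{1,x\right\}}{\min\left\{1,x\right\}}-1\right)^{2}\right]x^{\nu}
\end{align*}
for every $x>0$ and $\nu\in\left[0,1\right]$.

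The crux of the argument, and the step I expect to demand the most care, is the uniform control of the two prefactors over the spectral interval $\left[k,K\right]$; here the goal is to prove the envelope estimates
\begin{equation*}
\frac{\min\left\{1,x\right\}}{\max\left\{1,x\right\}}\leq\frac{\min\left\{1,K\right\}}{\max\left\{1,k\right\}}\quad\text{and}\quad\frac{\max\left\{1,x\right\}}{\min\left\{1,x\right\}}\leq\frac{\max\left\{1,K\right\}}{\min\left\{1,k\right\}}\qquad\text{for all }x\in\left[k,K\right].
\end{equation*}
Both rest on the elementary observation that $x\mapsto\min\left\{1,x\right\}/\max\left\{1,x\right\}$ is a ``tent'' peaked at $x=1$ (equal to $x$ on $\left(0,1\right]$ and to $1/x$ on $\left[1,\infty\right)$), so that its reciprocal is a ``valley'' with least value $1$ at $x=1$; the verification then splits into the three cases $K<1$, $k\leq1\leq K$, and $k>1$, according to where $1$ falls relative to $\left[k,K\right]$. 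In the straddling case the right-hand constant $\max\left\{1,K\right\}/\min\left\{1,k\right\}=K/k$ need not be the exact maximum of the reciprocal, but it still dominates it, which is all that the upper bound requires. Since $1-\min\left\{1,x\right\}/\max\left\{1,x\right\}\geq0$ and $\max\left\{1,x\right\}/\min\left\{1,x\right\}-1\geq0$ (and likewise for the two constants), squaring preserves the two estimates, and as $\exp$ is increasing the left prefactor is bounded below, and the right prefactor above, by the corresponding constants built from $k$ and $K$. Inserting these constants into the display above yields $\alpha x^{\nu}\leq\left(1-\nu\right)+\nu x\leq\beta x^{\nu}$ for all $x\in\left[k,K\right]$, where $\alpha$ and $\beta$ are precisely the two exponential constants appearing in (\ref{e.3.11}).

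Finally I would invoke the functional calculus exactly as in Theorem \ref{t.3.2}: for any selfadjoint $X$ with $\limfunc{Sp}\left(X\right)\subset\left[k,K\right]$ the last scalar inequality becomes the operator inequality $\alpha X^{\nu}\leq\left(1-\nu\right)I+\nu X\leq\beta X^{\nu}$. Choosing $X=A^{-1/2}BA^{-1/2}$, whose spectrum lies in $\left[k,K\right]$ by hypothesis, and multiplying both sides by $A^{1/2}$, which preserves the operator order, the scalars $\alpha,\beta$ pass through unchanged while $A^{1/2}X^{\nu}A^{1/2}=A\sharp_{\nu}B$ and $A^{1/2}\left[\left(1-\nu\right)I+\nu X\right]A^{1/2}=A\nabla_{\nu}B$, which is exactly (\ref{e.3.11}). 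The only genuinely new ingredient compared with the earlier theorems is the envelope step; the reduction and the functional-calculus transport are routine.
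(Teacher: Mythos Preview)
Your proposal is correct and follows essentially the same route as the paper's own proof: reduce (\ref{e.2.12.b}) to a one-variable inequality by setting $a=1$, $b=x$, establish the two envelope estimates for $x\in[k,K]$ to replace the $x$-dependent exponential prefactors by the constants in (\ref{e.3.11}), and then pass to operators via the functional calculus and conjugation by $A^{1/2}$. The case analysis you outline for the envelope step is a welcome addition of detail, but no new idea is involved beyond what the paper already does.
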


\begin{proof}
From the inequality (\ref{e.2.12.b}) we have 
\begin{align}
& \exp \left[ \frac{1}{2}\nu \left( 1-\nu \right) \left( 1-\frac{\min
\left\{ 1,x\right\} }{\max \left\{ 1,x\right\} }\right) ^{2}\right] x^{\nu }
\label{e.3.12} \\
& \leq 1-\nu +\nu x  \notag \\
& \leq \exp \left[ \frac{1}{2}\nu \left( 1-\nu \right) \left( \frac{\max
\left\{ 1,x\right\} }{\min \left\{ 1,x\right\} }-1\right) ^{2}\right] x^{\nu
}  \notag
\end{align}%
for any $x>0$ and any $\nu \in \left[ 0,1\right] .$

If $x\in \left[ k,K\right] \subset \left( 0,\infty \right) $ then 
\begin{equation*}
0\leq \frac{\max \left\{ 1,x\right\} }{\min \left\{ 1,x\right\} }-1\leq 
\frac{\max \left\{ 1,K\right\} }{\min \left\{ 1,k\right\} }-1
\end{equation*}%
and%
\begin{equation*}
0\leq 1-\frac{\min \left\{ 1,K\right\} }{\max \left\{ 1,k\right\} }\leq 1-%
\frac{\min \left\{ 1,x\right\} }{\max \left\{ 1,x\right\} },
\end{equation*}%
which implies that%
\begin{equation*}
\exp \left[ \frac{1}{2}\nu \left( 1-\nu \right) \left( \frac{\max \left\{
1,x\right\} }{\min \left\{ 1,x\right\} }-1\right) ^{2}\right] \leq \exp %
\left[ \frac{1}{2}\nu \left( 1-\nu \right) \left( \frac{\max \left\{
1,K\right\} }{\min \left\{ 1,k\right\} }-1\right) ^{2}\right]
\end{equation*}%
and%
\begin{equation*}
\exp \left[ \frac{1}{2}\nu \left( 1-\nu \right) \left( 1-\frac{\min \left\{
1,K\right\} }{\max \left\{ 1,k\right\} }\right) ^{2}\right] \leq \exp \left[ 
\frac{1}{2}\nu \left( 1-\nu \right) \left( 1-\frac{\min \left\{ 1,x\right\} 
}{\max \left\{ 1,x\right\} }\right) ^{2}\right] .
\end{equation*}%
By (\ref{e.3.12}) we then have%
\begin{align}
& \exp \left[ \frac{1}{2}\nu \left( 1-\nu \right) \left( 1-\frac{\min
\left\{ 1,K\right\} }{\max \left\{ 1,k\right\} }\right) ^{2}\right] x^{\nu }
\label{e.3.13} \\
& \leq 1-\nu +\nu x  \notag \\
& \leq \exp \left[ \frac{1}{2}\nu \left( 1-\nu \right) \left( \frac{\max
\left\{ 1,K\right\} }{\min \left\{ 1,k\right\} }-1\right) ^{2}\right] x^{\nu
}  \notag
\end{align}%
for any $x\in \left[ k,K\right] $ and any $\nu \in \left[ 0,1\right] .$

If $X$ is a selfadjoint operator with $\limfunc{Sp}\left( X\right) \subset %
\left[ k,K\right] ,$ then by (\ref{e.3.13}) we have%
\begin{align}
& \exp \left[ \frac{1}{2}\nu \left( 1-\nu \right) \left( 1-\frac{\min
\left\{ 1,K\right\} }{\max \left\{ 1,k\right\} }\right) ^{2}\right] X^{\nu }
\label{e.3.14} \\
& \leq \left( 1-\nu \right) I+\nu X  \notag \\
& \leq \exp \left[ \frac{1}{2}\nu \left( 1-\nu \right) \left( \frac{\max
\left\{ 1,K\right\} }{\min \left\{ 1,k\right\} }-1\right) ^{2}\right] X^{\nu
}.  \notag
\end{align}%
Now, if we take in (\ref{e.3.14}) $X=A^{-1/2}BA^{-1/2},$ then we get%
\begin{align}
& \exp \left[ \frac{1}{2}\nu \left( 1-\nu \right) \left( 1-\frac{\min
\left\{ 1,K\right\} }{\max \left\{ 1,k\right\} }\right) ^{2}\right] \left(
A^{-1/2}BA^{-1/2}\right) ^{\nu }  \label{e.3.15} \\
& \leq \left( 1-\nu \right) I+\nu A^{-1/2}BA^{-1/2}  \notag \\
& \leq \exp \left[ \frac{1}{2}\nu \left( 1-\nu \right) \left( \frac{\max
\left\{ 1,K\right\} }{\min \left\{ 1,k\right\} }-1\right) ^{2}\right] \left(
A^{-1/2}BA^{-1/2}\right) ^{\nu }.  \notag
\end{align}%
By multiplying both sides of (\ref{e.3.6}) by $A^{1/2}$ we get the desired
result (\ref{e.3.11}).
\end{proof}

\begin{corollary}
\label{c.3.1}If either $0<m^{\prime }I\leq A\leq mI<MI\leq B\leq M^{\prime
}I $ for positive real numbers $m,$ $m^{\prime },$ $M,$ $M^{\prime }$ or $%
0<m^{\prime }I\leq B\leq mI<MI\leq A\leq M^{\prime }I,$ then by putting $h:=%
\frac{M}{m},$ $h^{\prime }:=\frac{M^{\prime }}{m^{\prime }}$ we have%
\begin{align}
\exp \left[ \frac{1}{2}\nu \left( 1-\nu \right) \left( \frac{h^{\prime }-1}{%
h^{\prime }}\right) ^{2}\right] A\sharp _{\nu }B& \leq A\nabla _{\nu }B
\label{e.3.16} \\
& \leq \exp \left[ \frac{1}{2}\nu \left( 1-\nu \right) \left( h-1\right) ^{2}%
\right] A\sharp _{\nu }B.  \notag
\end{align}
\end{corollary}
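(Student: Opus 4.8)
The plan is to obtain the corollary as a direct specialization of Theorem \ref{t.3.3}, using Remark \ref{r.3.1} to read off admissible constants $k,K$ with $kI\leq A^{-1/2}BA^{-1/2}\leq KI$ from each of the two hypothesized orderings, and then simplifying the two exponential factors in (\ref{e.3.11}). Since $m<M$ gives $h=\frac{M}{m}>1$ and $m'<M'$ gives $h'=\frac{M'}{m'}>1$, no analysis is required beyond deciding, in each case, which of the quantities occurring inside the $\min$ and $\max$ exceeds $1$.

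First I would handle the ordering $0<m'I\leq A\leq mI<MI\leq B\leq M'I$. By Remark \ref{r.3.1} we may apply Theorem \ref{t.3.3} with $k=h'$ and $K=h$. Because $h,h'>1$ we get $\min\{1,K\}=1$ and $\max\{1,k\}=h'$, so the lower factor of (\ref{e.3.11}) becomes $1-\frac{1}{h'}=\frac{h'-1}{h'}$; likewise $\max\{1,K\}=h$ and $\min\{1,k\}=1$, so the upper factor becomes $h-1$. Substituting these into (\ref{e.3.11}) yields exactly (\ref{e.3.16}).

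Next I would treat the reversed ordering $0<m'I\leq B\leq mI<MI\leq A\leq M'I$, for which Remark \ref{r.3.1} permits $k=\frac{1}{h}$ and $K=\frac{1}{h'}$; now the relevant reciprocals all lie in $(0,1)$. The point I expect to demand the most care is verifying that this opposite regime reproduces the \emph{same} two factors rather than their reciprocals: here $\min\{1,K\}=\frac{1}{h'}$ and $\max\{1,k\}=1$, so the lower factor is again $1-\frac{1}{h'}=\frac{h'-1}{h'}$, while $\max\{1,K\}=1$ and $\min\{1,k\}=\frac{1}{h}$, so the upper factor is again $\frac{1}{1/h}-1=h-1$. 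Thus both admissible orderings collapse to the single pair of bounds in (\ref{e.3.16}). The remaining subtlety is purely one of bookkeeping --- tracking, in each $\min$ and $\max$, which reciprocal is the numerator and which the denominator, since $h$ and $h'$ enter the lower and upper exponents asymmetrically --- and everything else is routine substitution into (\ref{e.3.11}).
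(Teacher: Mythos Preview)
Your proposal is correct and follows essentially the same approach as the paper's own proof: in each of the two orderings you invoke the spectral bounds on $A^{-1/2}BA^{-1/2}$ from Remark~\ref{r.3.1}, apply Theorem~\ref{t.3.3} (i.e., inequality~(\ref{e.3.11})) with those values of $k$ and $K$, and simplify the $\min/\max$ expressions to $\frac{h'-1}{h'}$ and $h-1$ exactly as the paper does. The only cosmetic difference is that the paper restates the content of Remark~\ref{r.3.1} inline rather than citing it.
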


\begin{proof}
If $0<m^{\prime }I\leq A\leq mI<MI\leq B\leq M^{\prime }I,$ then we have%
\begin{equation*}
0<h^{\prime }I\leq A^{-1/2}BA^{-1/2}\leq hI.
\end{equation*}%
Using (\ref{e.3.11}) for $k=h^{\prime }$ and $K=h$ we get 
\begin{multline*}
\exp \left[ \frac{1}{2}\nu \left( 1-\nu \right) \left( 1-\frac{1}{h^{\prime }%
}\right) ^{2}\right] A\sharp _{\nu }B\leq A\nabla _{\nu }B \\
\leq \exp \left[ \frac{1}{2}\nu \left( 1-\nu \right) \left( h-1\right) ^{2}%
\right] A\sharp _{\nu }B
\end{multline*}%
and the inequality (\ref{e.3.16}) is proved.

If $0<m^{\prime }I\leq B\leq mI<MI\leq A\leq M^{\prime }I,$ then we have%
\begin{equation*}
0<\frac{1}{h}I\leq A^{-1/2}BA^{-1/2}\leq \frac{1}{h^{\prime }}I.
\end{equation*}%
Using (\ref{e.3.11}) for $k=\frac{1}{h}$ and $K=\frac{1}{h^{\prime }}$ we get%
\begin{multline*}
\exp \left[ \frac{1}{2}\nu \left( 1-\nu \right) \left( 1-\frac{1}{h^{\prime }%
}\right) ^{2}\right] A\sharp _{\nu }B\leq A\nabla _{\nu }B \\
\leq \exp \left[ \frac{1}{2}\nu \left( 1-\nu \right) \left( h-1\right) ^{2}%
\right] A\sharp _{\nu }B
\end{multline*}%
and the inequality (\ref{e.3.16}) is also obtained.
\end{proof}

\section{Some Extension for Functions}

We can extend some of the above results for functions of operators as
follows.

\begin{theorem}
\label{t.4.1}Let $f:J\subset \mathbb{R}\rightarrow \mathbb{R}$ be a twice
differentiable function on the interval $\mathring{J}$, the interior of $J$.
Suppose that there exists the constants $d,$ $D$ such that%
\begin{equation}
d\leq f^{\prime \prime }\left( t\right) \leq D\text{ for any }t\in \mathring{%
J}.  \label{J}
\end{equation}%
If $A$ is a positive operator and $B$ a selfadjoint operator such that%
\begin{equation}
\gamma I\leq A^{-1/2}BA^{-1/2}\leq \Gamma I,  \label{C}
\end{equation}%
with $\left[ \gamma ,\Gamma \right] \subset \mathring{J},$ then we have%
\begin{align}
& \frac{1}{2}\left( \Gamma A^{1/2}-BA^{-1/2}\right) \left(
A^{-1/2}B-A^{1/2}\gamma \right) d  \label{e.4.1} \\
& \leq \frac{1}{\Gamma -\gamma }\left[ \left( \Gamma A-B\right) f\left(
\gamma \right) +\left( B-A\gamma \right) f\left( \Gamma \right) \right]
-A\sharp _{f}B  \notag \\
& \leq \frac{1}{2}\left( \Gamma A^{1/2}-BA^{-1/2}\right) \left(
A^{-1/2}B-A^{1/2}\gamma \right) D.  \notag
\end{align}
\end{theorem}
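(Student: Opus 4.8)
The plan is to reduce the asserted operator inequality to an elementary scalar estimate for the chord (linear interpolant) of $f$, and then transport that estimate through the functional calculus followed by a congruence by $A^{1/2}$. Write $\ell_{f}$ for the chord of $f$ on $[\gamma,\Gamma]$, that is $\ell_{f}(x)=\frac{1}{\Gamma-\gamma}\left[(\Gamma-x)f(\gamma)+(x-\gamma)f(\Gamma)\right]$. The scalar inequality I would first establish is
\begin{equation*}
\frac{1}{2}d\,(\Gamma-x)(x-\gamma)\leq \ell_{f}(x)-f(x)\leq \frac{1}{2}D\,(\Gamma-x)(x-\gamma)
\end{equation*}
for every $x\in[\gamma,\Gamma]$.

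To prove this scalar estimate I would reuse the auxiliary-function device from Lemma \ref{l.2.1}. The function $g_{D}(x):=\frac{1}{2}Dx^{2}-f(x)$ satisfies $g_{D}^{\prime\prime}=D-f^{\prime\prime}\geq 0$, hence is convex on $\mathring{J}$, so its graph lies below its own chord on $[\gamma,\Gamma]$, i.e. $\ell_{g_{D}}(x)-g_{D}(x)\geq 0$. Since the chord operation is linear, $\ell_{g_{D}}=\frac{1}{2}D\,\ell_{x^{2}}-\ell_{f}$, and a one-line computation gives the key identity $\ell_{x^{2}}(x)-x^{2}=(\Gamma-x)(x-\gamma)$. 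Substituting yields the upper bound. Taking instead $g_{d}(x):=f(x)-\frac{1}{2}dx^{2}$, which is convex because $g_{d}^{\prime\prime}=f^{\prime\prime}-d\geq 0$, and repeating the argument gives the lower bound.

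Next I would pass to operators. Put $X:=A^{-1/2}BA^{-1/2}$, a selfadjoint operator whose spectrum is contained in $[\gamma,\Gamma]$ by \eqref{C}. Applying the functional calculus to the scalar inequality (the factors $\Gamma I-X$ and $X-\gamma I$ commute, so $(\Gamma I-X)(X-\gamma I)$ is an unambiguous positive operator) gives
\begin{equation*}
\frac{1}{2}d\,(\Gamma I-X)(X-\gamma I)\leq \ell_{f}(X)-f(X)\leq \frac{1}{2}D\,(\Gamma I-X)(X-\gamma I).
\end{equation*}
Finally I would multiply both sides on the left and right by $A^{1/2}$; since the map $T\mapsto A^{1/2}TA^{1/2}$ preserves the operator order, the inequalities are retained. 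The conclusion follows from the matching identities $A^{1/2}(\Gamma I-X)A^{1/2}=\Gamma A-B$, $A^{1/2}(X-\gamma I)A^{1/2}=B-A\gamma$, and $A^{1/2}f(X)A^{1/2}=A\sharp_{f}B$, which turn the congruence of $\ell_{f}(X)-f(X)$ into the middle member of \eqref{e.4.1}.

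The main obstacle is the bookkeeping in the outer bounds rather than anything conceptual. The point is to recognize that $BA^{-1/2}=A^{1/2}X$ and $A^{-1/2}B=XA^{1/2}$, so that $\Gamma A^{1/2}-BA^{-1/2}=A^{1/2}(\Gamma I-X)$ and $A^{-1/2}B-A^{1/2}\gamma=(X-\gamma I)A^{1/2}$; multiplying these two factors gives exactly $A^{1/2}(\Gamma I-X)(X-\gamma I)A^{1/2}$, which is precisely the congruence of the positive operator $(\Gamma I-X)(X-\gamma I)$. Once this factorization is spotted, the outer terms of \eqref{e.4.1} are seen to be the congruences of the scalar bounds, and the proof closes.
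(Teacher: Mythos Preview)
Your proposal is correct and follows essentially the same route as the paper: establish the scalar chord estimate $\frac{1}{2}d(\Gamma-x)(x-\gamma)\leq \ell_f(x)-f(x)\leq \frac{1}{2}D(\Gamma-x)(x-\gamma)$, apply the functional calculus to $X=A^{-1/2}BA^{-1/2}$, and conjugate by $A^{1/2}$. The only cosmetic difference is that the paper obtains the scalar estimate by invoking Lemma~\ref{l.2.1} with $a=\gamma$, $b=\Gamma$ and then substituting $\nu=\frac{x-\gamma}{\Gamma-\gamma}$, whereas you re-derive it directly from the convexity of $g_D$ and $g_d$ together with the identity $\ell_{x^2}(x)-x^2=(\Gamma-x)(x-\gamma)$; these are the same argument in different clothing.
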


\begin{proof}
From Lemma \ref{l.2.1} we have%
\begin{align}
\frac{1}{2}\nu \left( 1-\nu \right) d\left( \Gamma -\gamma \right) ^{2}&
\leq \left( 1-\nu \right) f\left( \gamma \right) +\nu f\left( \Gamma \right)
-f\left( \left( 1-\nu \right) \gamma +\nu \Gamma \right)   \label{e.4.2} \\
& \leq \frac{1}{2}\nu \left( 1-\nu \right) D\left( \Gamma -\gamma \right)
^{2}  \notag
\end{align}%
for any $\nu \in \left[ 0,1\right] .$

If we take in (\ref{e.4.2}) $\nu =\frac{x-\gamma }{\Gamma -\gamma }\in \left[
0,1\right] $ for $x\in \left[ \gamma ,\Gamma \right] ,$ then we get%
\begin{eqnarray*}
\frac{1}{2}\left( \Gamma -x\right) \left( x-\gamma \right) d &\leq &\frac{1}{%
\Gamma -\gamma }\left[ \left( \Gamma -x\right) f\left( \gamma \right)
+\left( x-\gamma \right) f\left( \Gamma \right) \right] -f\left( x\right) \\
&\leq &\frac{1}{2}\left( \Gamma -x\right) \left( x-\gamma \right) D
\end{eqnarray*}%
for any $x\in \left[ \gamma ,\Gamma \right] .$

Using the functional calculus for continuous functions we have%
\begin{align}
& \frac{1}{2}\left( \Gamma I-X\right) \left( X-\gamma I\right) d
\label{e.4.3} \\
& \leq \frac{1}{\Gamma -\gamma }\left[ \left( \Gamma I-X\right) f\left(
\gamma \right) +\left( X-\gamma I\right) f\left( \Gamma \right) \right]
-f\left( X\right)  \notag \\
& \leq \frac{1}{2}\left( \Gamma I-X\right) \left( X-\gamma I\right) D  \notag
\end{align}%
for any selfadjoint operator $X$ with $\limfunc{Sp}(X)\subset \left[ \gamma
,\Gamma \right] .$

Now, if we write the inequality (\ref{e.4.3}) for $X$ $=A^{-1/2}BA^{-1/2}$
then we get%
\begin{align}
& \frac{1}{2}\left( \Gamma I-A^{-1/2}BA^{-1/2}\right) \left(
A^{-1/2}BA^{-1/2}-\gamma I\right) d  \label{e.4.4} \\
& \leq \frac{1}{\Gamma -\gamma }\left[ \left( \Gamma
I-A^{-1/2}BA^{-1/2}\right) f\left( \gamma \right) +\left(
A^{-1/2}BA^{-1/2}-\gamma I\right) f\left( \Gamma \right) \right]   \notag \\
& -f\left( A^{-1/2}BA^{-1/2}\right)   \notag \\
& \leq \frac{1}{2}\left( \Gamma I-A^{-1/2}BA^{-1/2}\right) \left(
A^{-1/2}BA^{-1/2}-\gamma I\right) D.  \notag
\end{align}%
By multiplying both sides of (\ref{e.4.4}) by $A^{1/2}$ we get (\ref{e.4.1}).
\end{proof}

If $\gamma >0$ in (\ref{C}) and take $f\left( t\right) =t^{p}$, $t>0$ where $%
p\in \left( 0,\infty \right) \cup (1,\infty )$ then%
\begin{equation*}
f^{\prime \prime }\left( t\right) =p\left( p-1\right) t^{p-2},t>0
\end{equation*}%
and if we take%
\begin{equation*}
d=d_{p}:=p\left( p-1\right) \left\{ 
\begin{array}{l}
\gamma ^{p-2},\text{ }p\geq 2 \\ 
\\ 
\Gamma ^{p-2},p\in \left( 0,\infty \right) \cup (1,2)%
\end{array}%
\right. 
\end{equation*}%
and%
\begin{equation*}
D=D_{p}:=p\left( p-1\right) \left\{ 
\begin{array}{l}
\Gamma ^{p-2},\text{ }p\geq 2 \\ 
\\ 
\gamma ^{p-2},p\in \left( 0,\infty \right) \cup (1,2)%
\end{array}%
\right. 
\end{equation*}%
we have from (\ref{e.3.13}) that%
\begin{align}
& \frac{1}{2}\left( \Gamma A^{1/2}-BA^{-1/2}\right) \left(
A^{-1/2}B-A^{1/2}\gamma \right) d_{p}  \label{e.4.5} \\
& \leq \frac{\Gamma A-B}{\Gamma -\gamma }\gamma ^{p}+\frac{B-A\gamma }{%
\Gamma -\gamma }\Gamma ^{p}-A\sharp _{p}B  \notag \\
& \leq \frac{1}{2}\left( \Gamma A^{1/2}-BA^{-1/2}\right) \left(
A^{-1/2}B-A^{1/2}\gamma \right) D_{p}  \notag
\end{align}%
where $A\sharp _{p}B:=A^{1/2}\left( A^{-1/2}BA^{-1/2}\right) ^{p}A^{1/2},$ $%
p\in \left( 0,\infty \right) \cup (1,\infty )$ and $A,$ $B>0$ satisfy the
condition (\ref{C}).

If $p\in \left( 0,1\right) $ and $A,$ $B>0$ satisfy the condition (\ref{C})
with $\gamma >0$ then by taking $f\left( t\right) =-t^{p}$, $t>0$ we have $%
f^{\prime \prime }\left( t\right) =p\left( 1-p\right) t^{p-2},$ $t>0$ giving
that 
\begin{equation*}
p\left( 1-p\right) \Gamma ^{p-2}\leq f^{\prime \prime }\left( t\right) \leq
p\left( 1-p\right) \gamma ^{p-2}\text{ for }t\in \left[ \gamma ,\Gamma %
\right] .
\end{equation*}%
Therefore, by choosing $d=p\left( 1-p\right) \Gamma ^{p-2}$ and $D=p\left(
1-p\right) \gamma ^{p-2}$ in (\ref{e.3.13}) we get%
\begin{align}
& p\left( 1-p\right) \Gamma ^{p-2}\left( \Gamma A^{1/2}-BA^{-1/2}\right)
\left( A^{-1/2}B-A^{1/2}\gamma \right)   \label{e.4.6} \\
& \leq A\sharp _{p}B-\frac{\Gamma A-B}{\Gamma -\gamma }\gamma ^{p}-\frac{%
B-A\gamma }{\Gamma -\gamma }\Gamma ^{p}  \notag \\
& \leq p\left( 1-p\right) \gamma ^{p-2}\left( \Gamma
A^{1/2}-BA^{-1/2}\right) \left( A^{-1/2}B-A^{1/2}\gamma \right)   \notag
\end{align}%
provided $p\in \left( 0,1\right) $ and $A,$ $B>0$ satisfy the condition (\ref%
{C}) with $\gamma >0.$

\begin{theorem}
\label{t.4.2}Let $f:J\subset \mathbb{R}\rightarrow \mathbb{R}$ be a twice
differentiable function on the interval $\mathring{J}$, the interior of $J$.
Suppose that there exists the constants $d,$ $D$ such that (\ref{J}) is
valid. If $A$ is a positive operator and $B$ a selfadjoint operator such
that the condition (\ref{C}) is valid with $\left[ \gamma ,\Gamma \right]
\subset \mathring{J}$ and $\gamma <1<\Gamma $ then we have%
\begin{multline}
\frac{1}{2}\nu \left( 1-\nu \right) dA^{1/2}\left(
A^{-1/2}BA^{-1/2}-I\right) ^{2}A^{1/2}  \label{e.4.7} \\
\leq \left( 1-\nu \right) f\left( 1\right) A+\nu A^{1/2}f\left(
A^{-1/2}BA^{-1/2}\right) A^{1/2} \\
-A^{1/2}f\left( \left( 1-\nu \right) I+\nu A^{-1/2}BA^{-1/2}\right) A^{1/2}
\\
\leq \frac{1}{2}\nu \left( 1-\nu \right) DA^{1/2}\left(
A^{-1/2}BA^{-1/2}-I\right) ^{2}A^{1/2}
\end{multline}%
for any $\nu \in \left[ 0,1\right] .$

In particular, we have%
\begin{multline}
\frac{1}{8}dA^{1/2}\left( A^{-1/2}BA^{-1/2}-I\right) ^{2}A^{1/2}
\label{e.4.7.a} \\
\leq \frac{1}{2}\left[ f\left( 1\right) A+A^{1/2}f\left(
A^{-1/2}BA^{-1/2}\right) A^{1/2}\right]  \\
-A^{1/2}f\left( \frac{I+A^{-1/2}BA^{-1/2}}{2}\right) A^{1/2} \\
\leq \frac{1}{8}DA^{1/2}\left( A^{-1/2}BA^{-1/2}-I\right) ^{2}A^{1/2}.
\end{multline}
\end{theorem}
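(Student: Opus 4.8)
The plan is to follow the same template as the proofs of Theorems \ref{t.3.1}, \ref{t.3.2}, \ref{t.3.3} and \ref{t.4.1}: specialize Lemma \ref{l.2.1} to a scalar inequality in a single variable, lift it to selfadjoint operators through the order-preserving functional calculus, and finish by a congruence with $A^{1/2}$. The decisive choice is to apply Lemma \ref{l.2.1} with the fixed value $a=1$ and the running value $b=x\in \left[ \gamma ,\Gamma \right] $. This is permissible exactly because the hypothesis $\gamma <1<\Gamma $ forces $1\in \left( \gamma ,\Gamma \right) \subset \mathring{J}$, so that both $a=1$ and $b=x$ lie in $\mathring{J}$ and the bound (\ref{J}) on $f^{\prime \prime }$ applies.

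Writing (\ref{e.2.1}) with $a=1$, $b=x$ gives, for every $x\in \left[ \gamma ,\Gamma \right] $ and $\nu \in \left[ 0,1\right] $, the scalar double inequality $\frac{1}{2}\nu \left( 1-\nu \right) d\left( x-1\right) ^{2}\leq \left( 1-\nu \right) f\left( 1\right) +\nu f\left( x\right) -f\left( \left( 1-\nu \right) +\nu x\right) \leq \frac{1}{2}\nu \left( 1-\nu \right) D\left( x-1\right) ^{2}$. Before lifting, I would check that the affine image $\left( 1-\nu \right) +\nu x$ remains in $\left[ \gamma ,\Gamma \right] $ for all such $x$ and $\nu $: since $1$ and $x$ both lie in $\left[ \gamma ,\Gamma \right] $, so does their convex combination, whence $f\left( \left( 1-\nu \right) +\nu x\right) $ is defined and all three expressions above are continuous functions of $x$ on $\left[ \gamma ,\Gamma \right] $.

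Next I would apply the functional calculus, which is order preserving, to these three functions of $x$: for any selfadjoint $X$ with $\limfunc{Sp}\left( X\right) \subset \left[ \gamma ,\Gamma \right] $ one obtains $\frac{1}{2}\nu \left( 1-\nu \right) d\left( X-I\right) ^{2}\leq \left( 1-\nu \right) f\left( 1\right) I+\nu f\left( X\right) -f\left( \left( 1-\nu \right) I+\nu X\right) \leq \frac{1}{2}\nu \left( 1-\nu \right) D\left( X-I\right) ^{2}$. Taking $X=A^{-1/2}BA^{-1/2}$, whose spectrum lies in $\left[ \gamma ,\Gamma \right] $ by (\ref{C}), and then applying the congruence $Y\mapsto A^{1/2}YA^{1/2}$ (which preserves the operator order because $A^{1/2}\geq 0$) term by term, I recover (\ref{e.4.7}): the constant term becomes $A^{1/2}\left( 1-\nu \right) f\left( 1\right) IA^{1/2}=\left( 1-\nu \right) f\left( 1\right) A$, the remaining two middle terms become $\nu A^{1/2}f\left( A^{-1/2}BA^{-1/2}\right) A^{1/2}$ and $-A^{1/2}f\left( \left( 1-\nu \right) I+\nu A^{-1/2}BA^{-1/2}\right) A^{1/2}$, and the two outer bounds become $\frac{1}{2}\nu \left( 1-\nu \right) dA^{1/2}\left( A^{-1/2}BA^{-1/2}-I\right) ^{2}A^{1/2}$ and its $D$-analogue. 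Setting $\nu =\frac{1}{2}$, which turns $\frac{1}{2}\nu \left( 1-\nu \right) $ into $\frac{1}{8}$ and the convex combination into the midpoint $\frac{1}{2}\left( I+A^{-1/2}BA^{-1/2}\right) $, yields the particular case (\ref{e.4.7.a}).

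I do not anticipate a substantial obstacle, as the argument instantiates the operator template already used repeatedly above; the single point demanding genuine care is the domain bookkeeping highlighted in the second paragraph, namely that $\gamma <1<\Gamma $ is precisely what puts $1$ in the interior of the working interval, so that Lemma \ref{l.2.1} may be used at $a=1$ and the intermediate values $\left( 1-\nu \right) +\nu x$ never leave $\left[ \gamma ,\Gamma \right] \subset \mathring{J}$. A second, more routine, point worth making explicit is that the congruence $Y\mapsto A^{1/2}YA^{1/2}$ is order preserving on bounded selfadjoint operators, which is exactly what legitimizes the final passage from the inequality in $X$ to the stated inequality in $A$ and $B$.
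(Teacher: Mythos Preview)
Your proposal is correct and follows essentially the same approach as the paper: specialize Lemma \ref{l.2.1} with $a=1$, $b=x\in[\gamma,\Gamma]$, lift the resulting scalar inequality to selfadjoint $X$ via the functional calculus, substitute $X=A^{-1/2}BA^{-1/2}$, and conjugate by $A^{1/2}$. Your write-up is in fact more careful than the paper's on the domain bookkeeping (that $(1-\nu)+\nu x\in[\gamma,\Gamma]$ and that the congruence preserves order), but the argument is the same.
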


\begin{proof}
We have from (\ref{e.2.1}) that 
\begin{align}
\frac{1}{2}\nu \left( 1-\nu \right) d\left( b-a\right) ^{2}& \leq \left(
1-\nu \right) f\left( a\right) +\nu f\left( b\right) -f\left( \left( 1-\nu
\right) a+\nu b\right)   \label{e.4.8} \\
& \leq \frac{1}{2}\nu \left( 1-\nu \right) D\left( b-a\right) ^{2}  \notag
\end{align}%
for any $a,$ $b\in \left[ \gamma ,\Gamma \right] $ and $\nu \in \left[ 0,1%
\right] .$

If we take  $a=1$ and $b=x$ in (\ref{e.4.8}) then we get%
\begin{align}
\frac{1}{2}\nu \left( 1-\nu \right) d\left( x-1\right) ^{2}& \leq \left(
1-\nu \right) f\left( 1\right) +\nu f\left( x\right) -f\left( \left( 1-\nu
\right) 1+\nu x\right)   \label{e.4.9} \\
& \leq \frac{1}{2}\nu \left( 1-\nu \right) D\left( x-1\right) ^{2}  \notag
\end{align}%
for any $x\in \left( \gamma ,\Gamma \right) .$

This implies in the operator order that%
\begin{align}
\frac{1}{2}\nu \left( 1-\nu \right) d\left( X-I\right) ^{2}& \leq \left(
1-\nu \right) f\left( 1\right) I+\nu f\left( X\right) -f\left( \left( 1-\nu
\right) I+\nu X\right)   \label{e.4.10} \\
& \leq \frac{1}{2}\nu \left( 1-\nu \right) D\left( X-I\right) ^{2}  \notag
\end{align}%
for any selfadjoint operator $X$ with $\limfunc{Sp}(X)\subset \left[ \gamma
,\Gamma \right] .$

If we take in (\ref{e.4.10}) $X=A^{-1/2}BA^{-1/2}$ and multiply in both
sides with $A^{1/2}$ then we get the desired result (\ref{e.4.7}).
\end{proof}

Consider the convex functions $f\left( t\right) =-\ln t,$ $t>0.$ Then $%
f^{\prime \prime }\left( t\right) =\frac{1}{t^{2}},$ $t\in \left[ \gamma
,\Gamma \right] $ and by taking $d=\frac{1}{\Gamma ^{2}}$ and $D=\frac{1}{%
\gamma ^{2}}$ in (\ref{e.4.7}) we get%
\begin{multline}
\frac{1}{2\Gamma ^{2}}\nu \left( 1-\nu \right) A^{1/2}\left(
A^{-1/2}BA^{-1/2}-I\right) ^{2}A^{1/2}  \label{e.3.23} \\
\leq A^{1/2}\left[ \ln \left( \left( 1-\nu \right) I+\nu
A^{-1/2}BA^{-1/2}\right) \right] A^{1/2} \\
-\nu A^{1/2}\left[ \ln \left( A^{-1/2}BA^{-1/2}\right) \right] A^{1/2} \\
\leq \frac{1}{2\gamma ^{2}}\nu \left( 1-\nu \right) A^{1/2}\left(
A^{-1/2}BA^{-1/2}-I\right) ^{2}A^{1/2}
\end{multline}%
provided $A,$ $B>0$ and satisfy (\ref{C}) while $\nu \in \left[ 0,1\right] .$

In particular, we have%
\begin{multline}
\frac{1}{8\Gamma ^{2}}A^{1/2}\left( A^{-1/2}BA^{-1/2}-I\right) ^{2}A^{1/2}
\label{e.2.24} \\
\leq A^{1/2}\left[ \ln \left( \frac{I+A^{-1/2}BA^{-1/2}}{2}\right) \right]
A^{1/2}-\frac{1}{2}A^{1/2}\left[ \ln \left( A^{-1/2}BA^{-1/2}\right) \right]
A^{1/2} \\
\leq \frac{1}{8\gamma ^{2}}A^{1/2}\left( A^{-1/2}BA^{-1/2}-I\right)
^{2}A^{1/2}.
\end{multline}

\end{document}